\theoremstyle{plain}
\newtheorem{theorem}{Theorem}[section]
\newtheorem{lemma}[theorem]{Lemma}
\newtheorem{proposition}[theorem]{Proposition}
\theoremstyle{definition}
\theoremstyle{remark}
\newtheorem{remark}[theorem]{Remark}
\newcommand{\T}{\mathbb{T}}
\newcommand{\R}{\mathbb{R}}
\newcommand{\Z}{\mathbb{Z}}
\newcommand{\C}{\mathbb{C}}
\newcommand{\N}{\mathbb{N}}
\newcommand{\tf}{\mathcal{T}}
\newcommand{\ip}[2]{\left\langle#1,#2\right\rangle}
\newcommand{\der}{\mathrm{d}}
\renewcommand{\phi}{\varphi}
\newcommand{\abs}[1]{\left| #1 \right|}
\newcommand{\aabs}[1]{\left\| #1 \right\|}
\newcommand{\GL}{\text{GL}}
\DeclareMathOperator{\tr}{tr}
\newcommand{\rt}{R}
\begin{document}

\title{On Radon transforms on compact Lie groups}


\author{Joonas Ilmavirta}
\address{Department of Mathematics and Statistics, University of Jyv\"askyl\"a, P.O.Box 35 (MaD) FI-40014 University of Jyv\"askyl\"a, Finland}
\email{joonas.ilmavirta@jyu.fi}
\urladdr{http://users.jyu.fi/~jojapeil}

\date{\today}

\begin{abstract}
We show that the Radon transform related to closed geodesics is injective on a Lie group if and only if the connected components are not homeomorphic to $S^1$ nor to $S^3$.
This is true for both smooth functions and distributions.
The key ingredients of the proof are finding totally geodesic tori and realizing the Radon transform as a family of symmetric operators indexed by nontrivial homomorphisms from $S^1$.
\end{abstract}

\keywords{Ray transforms, inverse problems, Lie groups, Fourier analysis}

\subjclass[2010]{46F12, 44A12, 22C05, 22E30}

\maketitle

\section{Introduction}

If~$G$ is a compact Lie group, is a function $f:G\to\C$ determined from its integrals over all closed (periodic) geodesics?
This article is concerned with answering this question, and the result is indeed affirmative apart from few special cases, and for those cases we provide counterexamples.
It suffices to consider connected groups, since the recovery in question can be done separately for each connected component; recall that the connected components of a compact Lie group are isometric to each other.
The only compact, connected Lie groups on which the recovery is impossible are the trivial group, $S^1=SO(2)=U(1)$ and $S^3=SU(2)=Sp(1)$.
The only spheres that are Lie groups are precisely~$S^1$ and~$S^3$.

Let~$\tilde\Gamma$ denote the set of periodic geodesics of~$G$.
The Radon transform (or the X-ray transform)~$\rt$ is an integral transform taking a function $f:G\to\C$ to the function $Rf:\tilde\Gamma\to\C$ defined by $Rf(\gamma)=\int_\gamma f$.
Our question is equivalent with asking if~$\rt$ is injective.
This definition obviously makes sense for smooth functions~$f$ and we extend the definition to distributions by duality.
Our injectivity results hold for distributions as well.

To fix the measure of integration in the definition of the Radon transform, we parametrize all periodic geodesics by the interval $[0,1]$ (or equivalently by~$\R/\Z$).
The exact parametrization of geodesics and definition of the Radon transform are given in the beginning of section~\ref{sec:radon}.

The same problem has been considered earlier on the sphere by Funk~\cite{F:S2-radon} and on tori by various authors~\cite{S:radon-variations,AR:radon-torus,I:torus}.
A very similar problem was very recently considered by Grinberg and Jackson~\cite{GJ:radon-symm-space} on symmetric spaces of compact type.
Instead of geodesics, they integrate over maximal tori.
They characterize the kernel of the Radon transform on~$L^2(G)$ and show that the Radon transform is injective if and only if the symmetric space coincides with its adjoint space.
All compact Lie groups are compact symmetric spaces, but not all of them are of compact type.

The Radon transform on the group~$SO(3)$ has applications in texture analysis.
If an object contains lattices in several orientations, one may be interested in the orientation density function $f:SO(3)\to\R$, which is a probability density.
Diffraction measurements give the Radon transform of~$f$, while~$f$ itself is not directly experimentally accessible.
To the best of our knowledge, applications in the study of polycrystalline structures are limited to~$SO(3)$.
The Radon transform on~$SO(3)$ has been studied because of this application (see e.g.~\cite{H:SO3-thesis,BEP:SO3-Radon}).
It turns out that~$SO(3)$ is the only connected, compact Lie group of rank one with injective Radon transform, whence it appears as a special case in the proof of our main result below.

Injectivity of the Radon transform has also been studied on closed manifolds that are not Lie groups or even symmetric spaces.

In particular, there are results on manifolds of negative curvature~\cite{GK:spectral2D,CS:negative-curvature-spectral} and more generally on Anosov manifolds~\cite{DS:anosov} (there are also results for tensor fields~\cite{PSU:anosov,PSU:anosov-mfld}).
Anosov manifolds can be seen as a generalization of manifolds with negative curvature.
Therefore they are very different from compact Lie groups, which never have negative curvature.
The limiting case, the flat torus~$\T^n$, is not an Anosov manifold.

Our methods are close to those used by the author for similar problems on tori~\cite{I:torus}.
The only compact, connected, abelian Lie groups are tori, and it turns out that the general case can be reduced to the abelian one.
We also provide a new proof of the injectivity of~$\rt$ on tori, which is more representation theoretical in nature than the original one.
The key observation made in~\cite{I:torus} is that the Radon transform is symmetric when the geodesics are parametrized in a suitable way, and this remains true when tori are replaced with more general Lie groups.

To uniquely determine what the geodesics are, we need to fix a metric on the Lie group.
%
We take any bi-invariant metric (which is known to exist on all compact Lie groups), whence geodesics are identified with cosets of one parameter subgroups.
It follows that all Lie subgroups are totally geodesic.
Geodesics are thus identified with left cosets of one parameter subgroups.
The Haar measure on~$G$ is the Riemannian volume measure given by the metric and thus bi-invariant.
We remark that on tori any left invariant metric is bi-invariant and all metrics give the same geodesics, so the choice of the metric is does not matter in the abelian case.

We denote the set of smooth functions on~$G$ by $\tf=\tf(G)$ and its dual (the space of distributions) by $\tf'=\tf'(G)$; these spaces are equipped with their usual topologies.
Our main result is the following:

\begin{theorem}
\label{thm:main}
Let~$G$ be a compact, connected Lie group that has at least two elements.
Equip~$G$ with any bi-invariant metric.
The following are equivalent:
\begin{enumerate}
\item\label{item:smooth} The Radon transform is injective on~$\tf(G)$.
\item\label{item:distr} The Radon transform is injective on~$\tf'(G)$.
\item\label{item:S1S3} The group~$G$ is neither~$S^1$ nor~$S^3$.
\end{enumerate}
\end{theorem}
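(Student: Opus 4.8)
The plan is to pass to representations via the Peter--Weyl theorem and thereby reduce both injectivity questions to a single statement about weights of irreducible representations. Parametrize the periodic geodesics as $t\mapsto g\gamma(t)$ with $g\in G$ and $\gamma\colon\R/\Z\to G$ a nontrivial one-parameter subgroup (equivalently, a nontrivial homomorphism $S^1\to G$), and put $\rt_\gamma f(g)=\int_0^1 f(g\gamma(t))\,\der t$, so that knowing $\rt f$ is the same as knowing the whole family $(\rt_\gamma f)_\gamma$. The operator $\rt_\gamma$ is convolution with the Haar probability measure $\mu_\gamma$ of the circle subgroup $\gamma(S^1)$, and since $\mu_\gamma$ is invariant under inversion, $\rt_\gamma$ is self-adjoint on $L^2(G)$; this is the symmetry announced in the introduction. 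On the Fourier side, for an irreducible unitary representation $\pi$ on $V_\pi$ one obtains $\widehat{\rt_\gamma f}(\pi)=\hat f(\pi)\,P^\pi_\gamma$, where $P^\pi_\gamma$ is the orthogonal projection of $V_\pi$ onto the subspace $V_\pi^{\gamma(S^1)}$ of $\gamma(S^1)$-fixed vectors --- integrating a representation of $S^1$ over $S^1$ is exactly the projection onto its invariants. This formula holds verbatim for $f\in\tf(G)$ and for $f\in\tf'(G)$; the only difference between the two cases is the admissible growth of the Fourier coefficients.

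This yields a clean criterion. One has $\rt f=0$ if and only if $\hat f(\pi)P^\pi_\gamma=0$ for every $\pi$ and every $\gamma$; the trivial representation only contributes the constraint $\int_G f=0$, while on a nontrivial $\pi$ the condition says that $\hat f(\pi)$ vanishes on $W_\pi:=\spn\bigcup_{\gamma}V_\pi^{\gamma(S^1)}$. Conjugating a circle subgroup by $h\in G$ carries $V_\pi^{\gamma(S^1)}$ to $\pi(h)V_\pi^{\gamma(S^1)}$, so $W_\pi$ is $G$-invariant and hence, by irreducibility, either $\{0\}$ or all of $V_\pi$. Therefore $\rt$ is injective on $\tf(G)$ --- and then automatically on $\tf'(G)$, since nothing in the argument distinguished the two spaces --- precisely when every nontrivial irreducible representation of $G$ admits a nonzero vector fixed by some circle subgroup (call this property $(\ast)$); and if $(\ast)$ fails for some $\pi$, then any nonzero matrix coefficient of $\pi$ lies in the kernel and is smooth, hence also distributional. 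This already shows that \eqref{item:smooth} and \eqref{item:distr} are equivalent, and reduces the theorem to proving that $(\ast)$ holds if and only if \eqref{item:S1S3} does.

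Now run a dichotomy on $\operatorname{rank} G$. If $\operatorname{rank} G\ge2$, fix a nontrivial $\pi$; relative to a maximal torus $T$ (totally geodesic, because the metric is bi-invariant) it has a nonzero weight $\lambda$, for otherwise $\pi$ would be trivial on $T$ and hence, as the maximal tori cover $G$, on all of $G$. Since $\lambda$ takes integer values on the cocharacter lattice $\Lambda\subset\mathfrak t$, the rational hyperplane $\{\lambda=0\}$ meets $\Lambda$ in a sublattice of rank $\operatorname{rank} G-1\ge1$, so there is a nonzero $X\in\Lambda$ with $\lambda(X)=0$; the circle subgroup $s\mapsto\exp(sX)$ then fixes every weight-$\lambda$ vector, so $(\ast)$ holds and $\rt$ is injective. (This is the ``totally geodesic torus'' ingredient: the relevant closed geodesics all lie inside one maximal torus, and one is in effect invoking the torus case.) If $\operatorname{rank} G=1$, the structure theory of compact connected Lie groups leaves only $S^1$, $\SU(2)=S^3$ and $\SO(3)$. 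For $S^1$ each nontrivial character $z\mapsto z^n$, $n\ne0$, is moved by every nontrivial circle subgroup, and for $\SU(2)$ the odd-dimensional irreducibles $\mathrm{Sym}^{2k+1}(\C^2)$ have only odd weights, so no circle subgroup fixes a nonzero vector; thus $(\ast)$ fails for $S^1$ and for $S^3$. For $\SO(3)$ every irreducible $\rho_\ell$ with $\ell\ge1$ has weights $-\ell,-\ell+1,\dots,\ell$ and in particular a nonzero zero-weight vector fixed by the maximal torus, so $(\ast)$ holds and $\rt$ is injective. As the rank-one compact connected groups are exactly $S^1,\SU(2),\SO(3)$, property $(\ast)$ holds precisely when $G$ is neither $S^1$ nor $S^3$; together with the previous paragraph this proves that \eqref{item:smooth}, \eqref{item:distr} and \eqref{item:S1S3} are equivalent.

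The main obstacle I anticipate is the Fourier-theoretic bookkeeping rather than any single deep step: parametrizing \emph{all} periodic geodesics (including multiply covered ones) so that $\rt_\gamma$ is genuinely convolution with $\mu_\gamma$, verifying the projection identity $\widehat{\rt_\gamma f}(\pi)=\hat f(\pi)P^\pi_\gamma$, and making sure the criterion is insensitive to replacing smooth functions by distributions (the Fourier-coefficient formula is the same and only the admissible growth changes; the self-adjointness of $\rt_\gamma$ is the conceptual reason the picture stays this clean). By comparison, the rank dichotomy and the rank-one classification are routine once the criterion is available; the one genuinely structural point is the passage from ``$W_\pi=V_\pi$ for every nontrivial $\pi$'' to ``every nontrivial $\pi$ has a circle-fixed vector'', which uses irreducibility in an essential way.
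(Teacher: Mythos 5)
Your proposal is correct, but it follows a genuinely different route from the paper's. The paper reduces everything to subgroups: it shows that injectivity on a totally geodesic subgroup implies injectivity on~$G$ (proposition~\ref{prop:subgroup}), handles rank $\geq 2$ by invoking the $2$-torus case (proved via lemmas~\ref{lma:rep-int} and~\ref{lma:faithless}, i.e.\ by finding a closed geodesic inside the kernel of each representation), cites the literature for~$\SO(3)$, and passes from smooth functions to distributions by a separate mollification argument (lemma~\ref{lma:smooth-distr}). You instead extract a single Fourier-side criterion: $\widehat{\rt_\gamma f}(\pi)=\hat f(\pi)P^\pi_\gamma$ with $P^\pi_\gamma$ the projection onto $\gamma(S^1)$-invariants, and the conjugation-invariance of $W_\pi=\spn\bigcup_\gamma V_\pi^{\gamma(S^1)}$ plus irreducibility shows that $\rt$ is injective iff every nontrivial $\pi$ has a nonzero circle-fixed vector. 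This buys you several things at once: smooth and distributional injectivity come out of the same computation (no mollifiers), the kernel is characterized exactly as the span of matrix coefficients of the exceptional representations (recovering the paper's counterexamples on~$S^1$ and~$S^3$ and matching the Grinberg--Jackson description on~$S^3$), and $\SO(3)$ is handled by the zero-weight vector of~$\rho_\ell$ rather than by citation. Your criterion is in fact a sharpened form of the paper's lemma~\ref{lma:rep-int}: there a weighted sum of representation integrals must be invertible, whereas your $G$-invariance argument shows a single geodesic with a nonzero fixed vector already suffices. The price is the Fourier-theoretic bookkeeping you flag yourself --- in particular the distributional definition of~$\rt$ via the symmetry~\eqref{eq:distr-def} and injectivity of the Fourier transform on~$\tf'$ must be set up before the ``holds verbatim for distributions'' claim is legitimate; the paper supplies exactly these ingredients. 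Two small corrections: the representations $\mathrm{Sym}^{2k+1}(\C^2)$ of $\SU(2)$ have dimension $2k+2$, so they are the \emph{even}-dimensional irreducibles (the relevant point, that all their weights are odd, is stated correctly); and in the $\SU(2)$ step you should say explicitly that every circle subgroup of a rank-one group is a full maximal torus, so that ``no circle-fixed vector'' really does follow from the absence of a zero weight.
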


The equivalence \ref{item:smooth}$\iff$\ref{item:S1S3} for semi-simple Lie groups was proven earlier by Grinberg~\cite{G:lie-radon} using methods different from ours.
See remark~\ref{rmk:G} below for details.

The definition of the Radon transform on distributions is given in equation~\eqref{eq:distr-def}.
Note that we have excluded the trivial group by assumption; it is of little interest to consider groups of dimension zero in this context.
We will sketch the proof here and give a detailed proof at the very end of this article.

The implication \ref{item:distr}$\implies$\ref{item:smooth} is trivial and the converse is based on a duality argument.
For the implication \ref{item:smooth}$\implies$\ref{item:S1S3} we only need to give counterexamples on the two exceptional groups.
The hardest part is showing that \ref{item:S1S3}$\implies$\ref{item:smooth}.

The rank of a Lie group is the dimension of its maximal torus.
Therefore all Lie groups of rank two or higher contain a two dimensional torus as a subgroup.
We will show below that if the Radon transform is injective on a subgroup of $G$, then it is injective on $G$ as well.
It then follows that the Radon transform can only fail to be injective when the rank is one.
But there are only three compact, connected Lie groups of rank one:~$S^1$, $S^3$ and~$SO(3)$.
Injectivity of the Radon transform has been shown earlier for~$SO(3)$ because of its applications.

\begin{remark}
Conditions~\ref{item:smooth} and~\ref{item:S1S3} are equivalent under the weaker assumption that the metric is only left (or right) invariant.
Proposition~\ref{prop:subgroup} remains true with one sided invariance and any left invariant metric on a torus is also right invariant.

Our proof of the implication \ref{item:distr}$\implies$\ref{item:smooth} and the definition of the Radon transform on distributions rely on lemma~\ref{lma:symm}, which uses left invariance of the metric and right invariance of the Haar measure.
If one can prove lemma~\ref{lma:symm} for a left invariant metric, the assumption of bi-invariance can be dropped from theorem~\ref{thm:main}.
\end{remark}

Besides injectivity, one naturally wants an explicit reconstruction for the transform~$\rt$.
As the injectivity is based on injectivity on tori, reconstruction can also be done once reconstruction is known on tori.
This reconstruction can be made explicitly on the Fourier side, see~\cite{I:torus}.
The only injectivity result not based on the one on tori is that of~$SO(3)$.
A reconstruction method for this group can be found in~\cite{H:SO3-thesis}.

One of the results in~\cite{GJ:radon-symm-space} was that the kernel of~$\rt$ on~$L^2(S^3)$ is precisely the space of antipodally antisymmetric functions.
If~$\sim$ is the equivalence relation identifying antipodal points on~$S^3$, then $S^3/{\sim}=SO(3)$.
This combined with the arguments of the proof of proposition~\ref{prop:quotient} gives a way to understand why the Radon transform is injective on~$SO(3)$.
The identification $S^3/{\sim}=SO(3)$ shows the projective structure $SO(3)=\R\mathbb P^3$; for a discussion of integral geometry in projective spaces see~\cite{G:projective-int-geom}.

If~$G=S^1$, the kernel of~$\rt$ is exactly the space of functions with zero mean.
This is easy to see since the Radon transform essentially takes the function to its mean.
If~$G$ is the trivial group, there are no geodesics, and the Radon transform of any function is the empty function.

Closed geodesics on~$G$ are, up to translation, nontrivial homomorphisms $\T^1\to G$.
For a nontrivial homomorphism $\gamma:\T^1\to G$ there is a surjective homomorphism $\phi:\T^1\to\T^1$ and an injective homomorphism $\sigma:\T^1\to G$ so that $\gamma=\sigma\circ\phi$.
As the integral over any function on $G$ over $\gamma$ is the same as that over $\sigma$, it is possible to restrict our attention to injective homomorphisms in the construction of geodesics.
This restriction corresponds to assuming that geodesics not only have period one, but that it is their minimal period.
We, however, do not make this restriction.

We can also define the $d$-plane Radon transform on~$G$ if~$G$ has rank~$d$ or higher.
In this case the geodesics are replaced with $d$-dimensional ``planes'', corresponding to homomorphisms $\T^d\to G$ that have image of dimension $d$.
These homomorphisms can also be written as a composition $\T^d\to\T^d\to G$ of a surjective and an injective homomorphism.
If the rank of~$G$ is strictly greater than~$d$, the $d$-plane Radon transform is injective on smooth functions on~$G$ by the arguments given in the proof above.
We will not investigate which groups of rank~$d$ have injective $d$-plane Radon transforms -- this question (on some Lie groups) was studied in~\cite{GJ:radon-symm-space}.

\begin{remark}
\label{rmk:G}
It was recently brought to the author's attention that the Radon transform on compact Lie groups was earlier studied by Grinberg~\cite{G:lie-radon}.
He showed that the Radon transform is injective on the space of continuous functions on compact semi-simple Lie groups other than $SU(2)$ (see~\cite[Corollary]{G:lie-radon}).
The injectivity proof provided by Grinberg does not give a reconstruction method.

In comparison, the main theorem of the present article shows injectivity not only on continuous functions but also distributions.
Moreover, our proof gives an inversion method: from the Radon transform of an unknown function one can recover its Fourier transform on any maximal torus (see~\cite{I:torus} for the explicit recovery of Fourier coefficients from the Radon transform).
Our method of proof is quite different from Grinberg's.
\end{remark}

\section{Some tools and observations}

The following lemma is of great importance to us, since it allows relating the Radon transform on a group to the one on a subgroup.
The result follows from the well known fact that the geodesics with respect to a left (or right) invariant metric are the left (or right) cosets of the nontrivial homomorphic images of~$S^1$.
For results on totally geodesic subgroups, we refer to~\cite{MPMM:totally-geodesic-subgroup}.

\begin{lemma}
\label{lma:totally-geod}
Every Lie subgroup of a compact Lie group is totally geodesic.
\end{lemma}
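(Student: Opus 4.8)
The plan is to reduce everything to the identity element and then invoke the explicit description of geodesics for the bi-invariant metric that has been fixed in the paper. Concretely, I would first recall two standard facts: for a bi-invariant metric the geodesics through the identity $e\in G$ are exactly the one-parameter subgroups $t\mapsto\exp(tX)$ with $X\in\mathfrak g=T_eG$, and every geodesic is a left translate of such a curve; and a (possibly immersed) submanifold is totally geodesic precisely when every ambient geodesic that is tangent to it at one point has its whole image contained in it.

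The main step is then a homogeneity argument. Let $H\le G$ be a Lie subgroup with Lie algebra $\mathfrak h=T_eH\subseteq\mathfrak g$, take $h\in H$ and $v\in T_hH$, and let $\gamma$ be the geodesic of $G$ with $\gamma(0)=h$ and $\dot\gamma(0)=v$. Since the metric is left-invariant, the left translation $L_{h^{-1}}$ is an isometry of $G$, and it maps $H$ into itself because $h^{-1}\in H$; hence $L_{h^{-1}}\circ\gamma$ is the geodesic starting at $e$ with initial velocity $X:=dL_{h^{-1}}v\in T_eH=\mathfrak h$, and it is enough to show that this curve stays in $H$.

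To finish, I would identify $L_{h^{-1}}\circ\gamma$ with $t\mapsto\exp(tX)$ using the description of geodesics above, and then use that the inclusion $\iota\colon H\hookrightarrow G$ is a Lie group homomorphism, so that it intertwines the exponential maps of $H$ and $G$: for $X\in\mathfrak h$ one has $\exp(tX)=\iota(\exp_H(tX))\in H$ for all $t$. Thus $L_{h^{-1}}\circ\gamma$, and therefore $\gamma$, has image in $H$, so $H$ is totally geodesic.

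I do not expect a real obstacle here; the only subtlety worth flagging is that a Lie subgroup need not be embedded (e.g.\ a dense one-parameter subgroup of a torus), so ``totally geodesic'' must be understood in the immersed sense — but the argument is insensitive to this, since it exhibits the relevant geodesic directly as a translate of the image of a one-parameter subgroup of $H$. Note also that only left-invariance of the metric and the identification of geodesics through $e$ with one-parameter subgroups are used, both of which are available for the bi-invariant metric fixed earlier in the paper.
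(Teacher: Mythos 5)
Your argument is correct and is exactly the route the paper takes: it only sketches the lemma by noting that geodesics for the (bi-)invariant metric are left cosets of one-parameter subgroups, so a geodesic tangent to $H$ translates to a one-parameter subgroup with initial velocity in $\mathfrak h$, which stays in $H$. Your write-up just fills in the details of that sketch (including the correct caveats about immersed subgroups and about where bi-invariance is actually used), so there is nothing to object to.
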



We will usually denote points on~$G$ by~$x$.
Integration is always done with respect to the bi-invariant Haar measure, which is simply denoted by~$\der x$.

We list below some simple observations about the Radon transform on Lie groups.
All of them are not needed for our result, but we think that they help understanding the nature of the problem.

\begin{proposition}
\label{prop:subgroup}
Suppose~$H$ is a Lie subgroup of a compact connected Lie group~$G$.
If the Radon transform (on smooth functions) is injective on~$H$, it is also injective on~$G$.
\end{proposition}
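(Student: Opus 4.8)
The plan is to reduce integration of a smooth function on $G$ to integration on the subgroup $H$ by averaging over the (left) translates of $H$, exploiting that every closed geodesic of $G$ that lies inside a coset $xH$ is, after translating back by $x$, a closed geodesic of $H$ (this uses lemma~\ref{lma:totally-geod}: $H$ is totally geodesic, so its geodesics are geodesics of $G$). Concretely, suppose $f\in\tf(G)$ has $\rt f=0$. For each $x\in G$ consider the smooth function $f_x:H\to\C$, $f_x(h)=f(xh)$. A closed geodesic $\gamma$ of $H$ gives, via $x\gamma$, a closed geodesic of $G$ with the same parametrization, and $\int_\gamma f_x=\int_{x\gamma}f=\rt f(x\gamma)=0$. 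Hence $\rt_H f_x=0$ for every $x$, and by the assumed injectivity of the Radon transform on $H$ we get $f_x\equiv 0$ on $H$ for every $x\in G$; in particular $f(x)=f_x(e)=0$. Since $x$ was arbitrary, $f\equiv 0$, so $\rt$ is injective on $\tf(G)$.

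The first step I would carry out carefully is the bookkeeping about parametrizations: the definition of $\rt$ fixes a parametrization of every periodic geodesic by $\R/\Z$, and I must check that if $\gamma:\R/\Z\to H\subseteq G$ is a periodic geodesic of $H$ with the chosen parametrization, then $x\gamma$ is a periodic geodesic of $G$ with exactly the parametrization that $\rt$ on $G$ assigns to it. Because the metric on $G$ is bi-invariant, left translation by $x$ is an isometry, so it sends unit-speed geodesics to unit-speed geodesics and preserves period; and because both the metric on $H$ and the one on $G$ are induced from the same bi-invariant metric, $H$-geodesics are $G$-geodesics with the same speed. Thus the parametrization matches up to the choice made in section~\ref{sec:radon}, and $\int_\gamma f_x=\int_{x\gamma}f$ holds with the correct measure. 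Second, I would note $f_x$ is genuinely smooth on $H$: it is the restriction to the embedded submanifold $xH$ of the smooth function $f$, pulled back along $h\mapsto xh$, which is smooth. Third, the conclusion $f_x\equiv0$ for all $x$, evaluated at $h=e$, yields $f\equiv0$, which is the whole point; no further density or approximation argument is needed since we directly recover pointwise values.

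The main obstacle is not conceptual but the careful matching of the integration measure and parametrization in the two Radon transforms, i.e.\ making sure that $\rt f(x\gamma)$ (as defined on $G$) literally equals $\rt_H f_x(\gamma)$ (as defined on $H$) and not merely a constant multiple or a reparametrization. This hinges on the fact, recorded in the excerpt, that geodesics are left cosets of one-parameter subgroups and that the paper's normalization parametrizes each periodic geodesic by $[0,1]$; once one fixes, for a given one-parameter subgroup image $\sigma(\T^1)$, the same underlying homomorphism $\sigma:\T^1\to G$ whether one views it as landing in $H$ or in $G$, the two transforms agree by construction. A minor additional point to dispatch: a priori the set of periodic geodesics of $H$ used in $\rt_H$ could be parametrized with a different period than the one inherited from $G$, but since the metrics agree this does not happen. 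Everything else in the argument is a one-line application of the hypothesis.
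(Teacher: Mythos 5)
Your argument is correct and is essentially the paper's own proof: restrict $f$ to each coset $xH$, use Lemma~\ref{lma:totally-geod} to see that closed geodesics of $H$ translate to closed geodesics of $G$ inside $xH$, and apply injectivity on $H$ to recover $f$ there. The extra bookkeeping you do about parametrizations (homomorphisms $S^1\to H$ versus $S^1\to G$, periods, and left translation being an isometry) is exactly the right point to check and is handled correctly.
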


\begin{proof}
Fix any $g\in G$.
The submanifold~$gH$ is totally geodesic in~$G$ by lemma~\ref{lma:totally-geod}, so the Radon transform of a function $f\in\tf(G)$ on~$G$ gives the Radon transform of $f|_{gH}$ on~$gH$.
Since~$gH$ is isometric to~$H$, the Radon transform is injective on it, so the function~$f$ can be reconstructed on~$gH$ from its Radon transform on~$G$.
Since this holds for all $g\in G$, we have reconstruction on all of~$G$.
\end{proof}

In the case of Lie groups, the following proposition follows directly from the previous one, but we present it in more generality.

\begin{proposition}
\label{prop:prod}
Let~$M$ and~$N$ be Riemannian manifolds.
If the Radon transform is injective on~$M$, then it is also injective on~$M\times N$.
\end{proposition}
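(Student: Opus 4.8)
The plan is to reduce the statement about $M \times N$ to the statement about $M$ by exhibiting, for each point $q \in N$, a totally geodesic copy of $M$ sitting inside $M \times N$, and then running the same slicing argument as in Proposition~\ref{prop:subgroup}. First I would recall that in a Riemannian product $M \times N$, the slice $M \times \{q\}$ is totally geodesic: a curve $t \mapsto (\gamma(t), q)$ is a geodesic of $M \times N$ precisely when $\gamma$ is a geodesic of $M$, since the Levi-Civita connection of the product splits as a direct sum and the second fundamental form of $M \times \{q\}$ vanishes identically. Consequently a closed geodesic of $M$ gives a closed geodesic of $M \times N$ lying entirely in the slice, and the parametrization we fixed (period one) is preserved by this inclusion.

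Next I would argue that for a smooth function $f$ on $M \times N$, the Radon transform $Rf$ restricted to those geodesics contained in the slice $M \times \{q\}$ agrees with the Radon transform on $M$ of the restriction $f(\cdot, q) \in \tf(M)$: since $M \times \{q\}$ is totally geodesic, integration over $\gamma \times \{q\}$ inside $M \times N$ is literally integration over $\gamma$ inside $M$, with the induced metric on the slice being exactly the metric of $M$. Therefore, if $Rf \equiv 0$ on $M \times N$, then for every fixed $q \in N$ the function $f(\cdot, q)$ has vanishing Radon transform on $M$; by the assumed injectivity on $M$ this forces $f(\cdot, q) \equiv 0$. Letting $q$ range over all of $N$ yields $f \equiv 0$ on $M \times N$, which is the claimed injectivity.

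I do not expect a serious obstacle here — the argument is essentially the one already used for Proposition~\ref{prop:subgroup}, with ``left coset $gH$'' replaced by ``slice $M \times \{q\}$'' and ``$gH$ isometric to $H$'' replaced by the tautological isometry $M \times \{q\} \cong M$. The only points needing a sentence of care are: (i) confirming that a \emph{closed} geodesic of $M$ really does give a closed geodesic of $M \times N$ rather than merely a geodesic segment, which is immediate since the inclusion is a totally geodesic isometric embedding and hence maps periodic geodesics to periodic geodesics of the same period; and (ii) checking that the fixed normalization of the integration measure on geodesics (period one on $[0,1]$, as specified in Section~\ref{sec:radon}) is compatible with this inclusion, which again holds because the embedding is an isometry onto its image. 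With these remarks in place the proof is a short paragraph.
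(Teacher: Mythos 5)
Your argument is correct and is essentially the paper's own proof: the paper also fixes $y\in N$, observes that $t\mapsto(\gamma(t),y)$ is a (closed) geodesic of $M\times N$ whenever $\gamma$ is one on $M$, recovers $f(\cdot,y)$ by injectivity on $M$, and lets $y$ range over $N$. Your extra remarks on total geodesy of the slices and compatibility of the period-one parametrization are just a more careful spelling-out of the same idea.
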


\begin{proof}
The proof is rather simple, so we only sketch the main points.

Let~$f$ be a function on~$M\times N$.
Fix any $y\in N$.
For any geodesic~$\gamma$ on~$M$, the function $t\mapsto(\gamma(t),y)$ is a geodesic on~$M\times N$.
Using injectivity of the Radon transform on~$M$ we can reconstruct~$f(\cdot,y)$ from the Radon transform of~$f$ on~$M\times N$.
Repeating this for all $y\in N$ gives~$f$ on all of~$M\times N$.
\end{proof}

The next observation concerns quotients.
For another example of comparison of inverse problems on manifolds and their quotients, see~\cite{C:rigidity-quotient}.

\begin{proposition}
\label{prop:quotient}
Let~$M$ be a closed Riemannian manifold and~$H$ a finite subgroup of the isometry group of~$M$.
If the Radon transform is injective on~$M$, it is also injective on the quotient~$M/H$.
\end{proposition}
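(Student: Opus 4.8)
The plan is to lift functions and geodesics from the quotient $M/H$ back to $M$, apply injectivity upstairs, and descend again. Let $\pi\colon M\to M/H$ be the quotient map, which is a local isometry since $H$ acts by isometries and, being finite on a closed manifold, acts properly discontinuously (the quotient is an orbifold, but away from the fixed-point set it is a genuine Riemannian manifold; since $H$ is finite the singular set has measure zero, so integration is unaffected). Given $f\colon M/H\to\C$ with $R_{M/H}f=0$, I would consider the pullback $\tilde f=f\circ\pi$ on $M$. The key point is that $\tilde f$ is $H$-invariant, and conversely every $H$-invariant function on $M$ descends to $M/H$.

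The main step is to relate the two Radon transforms. Since $\pi$ is a local isometry, it sends geodesics of $M$ to geodesics of $M/H$. For a periodic geodesic $\tilde\gamma$ on $M$, its image $\pi\circ\tilde\gamma$ is a periodic geodesic on $M/H$ (possibly traversed multiple times, but by the convention in the paper we allow geodesics that are not primitive, so this causes no trouble; one only needs to track the parametrization carefully so that periods match up to a positive integer factor). Then $\int_{\tilde\gamma}\tilde f = \int_{\tilde\gamma} f\circ\pi$ equals $\int_{\pi\circ\tilde\gamma} f$ up to the integer multiplicity coming from how many times $\pi\circ\tilde\gamma$ wraps around its primitive geodesic. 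Conversely, every periodic geodesic $\gamma$ on $M/H$ lifts locally, and a suitable lift (following the geodesic in $M$ starting from a preimage of $\gamma(0)$ with the corresponding initial velocity) is a periodic geodesic on $M$ whose image is $\gamma$. Hence $R_{M/H}f=0$ forces $R_M\tilde f=0$: for any periodic geodesic $\tilde\gamma$ on $M$, $\int_{\tilde\gamma}\tilde f$ is a positive multiple of $\int_{\pi\circ\tilde\gamma}f=0$.

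By injectivity of the Radon transform on $M$, this gives $\tilde f=0$, hence $f=0$ on $M/H$, proving injectivity there. The same argument works verbatim for distributions if one wishes, using that pullback by $\pi$ is continuous $\tf(M/H)\to\tf(M)$ (again modulo the orbifold subtlety, which for distributions would require a bit more care — but the statement as given is for smooth functions, so I would not pursue this).

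The main obstacle I anticipate is bookkeeping around parametrization and multiplicity: the paper fixes a specific parametrization of periodic geodesics by $[0,1]$, and a geodesic on $M$ of period one maps under $\pi$ to a geodesic on $M/H$ whose minimal period may be $1/k$ for some positive integer $k$ (when the lift crosses between $H$-orbits before closing up downstairs), so $\int_{\pi\circ\tilde\gamma}f$ (as the paper parametrizes it) is $k$ times the integral of $f$ over its primitive, and correspondingly $\int_{\tilde\gamma}\tilde f = k^{-1}$ (or $k$) times this — the exact factor depends on the chosen normalization of the measure on geodesics, but in all cases it is a fixed positive constant, so vanishing is preserved in both directions. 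A secondary point to address cleanly is that the quotient $M/H$ need not be a manifold at points with nontrivial stabilizer; since $H$ is finite the fixed-point sets are lower-dimensional and hence null, so "closed Riemannian manifold" should be read in the orbifold sense or one restricts attention to the smooth locus, and none of the integral-geometric identities above are affected.
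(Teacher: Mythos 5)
Your proposal is correct and follows essentially the same route as the paper: pull $f$ back to $\tilde f=f\circ\pi$ on $M$, observe that the integral of $\tilde f$ over any periodic geodesic of $M$ equals (a positive multiple of) the integral of $f$ over its image geodesic in $M/H$, apply injectivity on $M$, and descend. The extra bookkeeping you do on wrapping multiplicities and the singular locus is reasonable added care but does not change the argument.
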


\begin{proof}
Let the Radon transforms on~$M$ and~$M/H$ be $\rt_M$ and $\rt_{M/H}$, respectively.
Let $\pi:M\to M/H$ be the quotient map and~$\pi_R^{-1}$ any right inverse of it.
Let $f\in\tf(M/H)$.
We observe that for any geodesic~$\gamma$ on~$M$ we have
\begin{equation}
\rt_M(\pi^*f)(\gamma)
=
\int_\gamma f(\pi(\gamma(t)))\der t
=
\rt_{M/H}f(\pi\circ\gamma)
\eqqcolon
\pi^*(\rt_{M/H}f)(\gamma).
\end{equation}
Thus $\rt_{M/H}f=0$ implies $\rt_M\pi^*f=0$.
By injectivity of~$\rt_M$ we have $\pi^*f=0$, so $f=(\pi^*f)\circ\pi_R^{-1}=0$.
\end{proof}

The result of the proposition above can also be written as $\rt_{M/H}^{-1}=(\pi_R^{-1})^*\rt_M^{-1}\pi^*$.

\section{Fourier analysis on Lie groups}

Fourier analysis on the compact, connected Lie group~$G$ is one of our key tools.
(In fact, we only use it on tori, but we want to give a Fourier analytic view to the full problem at hand.)
If we assumed that~$G$ is abelian, it would follow that~$G$ is a torus, and the classical Fourier analysis on tori is indeed a special case of the more general framework that we will present.
Since the Radon transform on tori was already covered in~\cite{I:torus}, we will never assume that~$G$ is abelian.
Much of the discussion in this section follows the book of Ruzhansky and Turunen~\cite{RT:psdo-symm}.

\subsection{Representation theory}

We recall that a representation of the group~$G$ is a group homomorphism $\rho:G\to\GL(\C^n)$ for some~$n$.
The representation represents the elements of the group as $n\times n$ complex square matrices so that multiplication commutes with~$\rho$.
The number~$n$ is the dimension of the representation~$\rho$.
We restrict our attention to finite dimensional complex representations; as it turns out, these suffice for our purposes.

A representation~$\rho$ is unitary if each~$\rho(g)$, $g\in G$, is a unitary matrix.
A representation~$\rho$ is reducible if we have a splitting $\C^n=V_1\oplus V_2$ so that $\rho(g)V_i=V_i$ for all $g\in G$ for both $i=1,2$ and $0<\dim V_1<n$.
If~$\rho$ is not reducible, it is called irreducible.
Two representations~$\rho_1$ and~$\rho_2$ are equivalent if they have the same dimension and there is an invertible matrix~$A$ such that $\rho_1(g)=A\rho_2(g)A^{-1}$ for all $g\in G$.
To understand all representations of the group~$G$, it often suffices to study the irreducible unitary representations.
For basics of representation theory, see for example~\cite[Part~III]{RT:psdo-symm}.

We let~$\hat G$ denote the set of all irreducible unitary representations of the group~$G$, including only one representation from each equivalence class.
The set~$\hat G$ is not uniquely defined; there is a freedom of making a unitary change of basis for each representations, but this freedom is irrelevant for us.
The object~$\hat G$ is called the dual of~$G$, but it has a natural structure of a group if and only if~$G$ is abelian (in this case~$\hat G$ is called the Pontryagin dual group of~$G$).
The dual of the torus is $\widehat{\T^n}=\{\T^n\ni x\mapsto e^{ik\cdot x}\in U(1);k\in\Z^n\}\approx\Z^n$.
As for the torus, the set~$\hat G$ is always countably infinite.

\subsection{Fourier transform}

Ruzhansky and Turunen~\cite{RT:psdo-symm} define the Fourier transform of a function $f\in L^2(G)$ as a function $\tilde f:Q\to\C$ defined by
\begin{equation}
\tilde f(\rho,i,j)
=
\int_G f(x)\rho_{ij}(x)\der x,
\end{equation}
where
\begin{equation}
Q
=
\{(\rho,i,j);\rho\in\hat G\text{ and }1\leq i,j\leq\dim\rho\}.
\end{equation}
If~$G$ is a torus, this is the usual Fourier series.

We denote by~$\ell^2(Q)$ the space of functions $\phi:Q\to\C$ with the norm $\aabs{\phi}^2=\sum_{(\rho,i,j)\in Q}\dim(\rho)\abs{\phi(\rho,i,j)}^2$.
The most important properties of the Fourier series on tori hold true in this more general setting, namely:

\begin{theorem}[{\cite[Section~7.6]{RT:psdo-symm}}]
\label{thm:fourier}
The Fourier transform is an isometric bijection from~$L^2(G)$ to~$\ell^2(Q)$.
In particular, we have the Parseval formula
\begin{equation}
\int_G f(x)g(x)\der x
=
\sum_{(\rho,i,j)\in Q} \dim(\rho) \tilde f(\rho,i,j) \tilde g(\rho,i,j)
\end{equation}
for all $f,g\in L^2(G)$.
\end{theorem}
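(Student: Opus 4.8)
This statement is a form of the Peter--Weyl theorem, so the plan is to prove that the rescaled matrix coefficients $\sqrt{\dim\rho}\,\rho_{ij}$, with $(\rho,i,j)$ ranging over~$Q$, form an orthonormal basis of $L^2(G)$; the claimed isometry, the surjectivity onto~$\ell^2(Q)$, and the Parseval identity then all follow from the general theory of orthonormal bases in a Hilbert space.

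First I would record the Schur orthogonality relations. For $\rho,\sigma\in\hat G$ and any matrix~$A$ of the appropriate size, the matrix $\int_G\rho(x)A\sigma(x)^{-1}\der x$ intertwines~$\rho$ and~$\sigma$, so by Schur's lemma it vanishes when~$\rho$ and~$\sigma$ are inequivalent and is a scalar multiple of the identity when $\rho=\sigma$, the scalar being pinned down by taking traces. Choosing~$A$ among the elementary matrices and using unitarity to replace $\rho(x)^{-1}$ by $\rho(x)^*$ gives $\int_G\rho_{ik}(x)\overline{\sigma_{jl}(x)}\,\der x=\frac{1}{\dim\rho}\delta_{\rho\sigma}\delta_{ij}\delta_{kl}$. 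This is exactly the assertion that the functions $\sqrt{\dim\rho}\,\rho_{ij}$ form an orthonormal system in $L^2(G)$, equivalently that the Fourier transform $f\mapsto\tilde f$ sends them to an orthonormal system in $\ell^2(Q)$.

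The substantive point --- and the step where I expect the real work --- is completeness of this orthonormal system, which, unlike the purely algebraic orthogonality relations, is a genuinely analytic statement. I would let~$G$ act on $L^2(G)$ by left translations, fix a nonzero continuous function~$k$ with $k(x)=\overline{k(x^{-1})}$, and study the convolution operator $f\mapsto k*f$: it is self-adjoint, Hilbert--Schmidt (its kernel $(x,y)\mapsto k(xy^{-1})$ lies in $L^2(G\times G)$), and commutes with left translations, so the spectral theorem produces finite-dimensional, translation-invariant eigenspaces; letting~$k$ run through an approximate identity shows that the closed span of all finite-dimensional left-invariant subspaces is all of $L^2(G)$, and each such subspace splits into irreducibles whose entries lie in the linear span of the~$\rho_{ij}$. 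Alternatively, every compact Lie group has a faithful finite-dimensional representation, so the span of all matrix coefficients is a point-separating, conjugation-closed subalgebra of $C(G)$ containing the constants, hence dense by Stone--Weierstrass and a fortiori dense in $L^2(G)$.

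Granting completeness, the abstract Hilbert-space theory immediately yields that $f\mapsto\tilde f$ is an isometric bijection onto $\ell^2(Q)$ together with the polarized version of the norm identity. The only remaining point is to see that this polarized identity is literally the Parseval formula as stated; here one uses that complex conjugation is an anti-unitary involution of $L^2(G)$ and that $\overline{\rho_{ij}}$ are the matrix coefficients of the conjugate representation~$\bar\rho$, which again ranges over~$\hat G$, so the conjugation carried by the inner product of $L^2(G)$ can be absorbed into the relabeling $\rho\leftrightarrow\bar\rho$ to match the bilinear right-hand side. This last bookkeeping with conjugations and matrix indices is routine but is where the precise normalization of~$\hat G$ and of the Fourier transform enters.
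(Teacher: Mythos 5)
The paper gives no proof of this theorem; it is quoted from Ruzhansky--Turunen, and what you have written is the standard Peter--Weyl argument that the cited reference itself uses: Schur orthogonality to make $\sqrt{\dim\rho}\,\rho_{ij}$ an orthonormal system, compact self-adjoint convolution operators plus an approximate identity for completeness, and abstract Hilbert-space theory for the isometry, surjectivity and Plancherel identity. That part is correct. One caution on your alternative route: the existence of a faithful finite-dimensional representation of a general compact Lie group is usually itself deduced from Peter--Weyl, so the Stone--Weierstrass argument risks circularity and the convolution-operator argument should be regarded as the primary one.

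The step you label ``routine bookkeeping'' is in fact the only place where your proof and the displayed statement fail to meet, and you should not wave it through. With the paper's conventions ($\tilde f(\rho,i,j)=\int_G f\rho_{ij}\,\der x$, no conjugates anywhere), polarizing the Plancherel identity by applying it to $f$ and $\bar g$ gives
\begin{equation}
\int_G f(x)g(x)\,\der x
=
\sum_{(\rho,i,j)\in Q}\dim(\rho)\,\tilde f(\bar\rho,i,j)\,\tilde g(\rho,i,j),
\end{equation}
and relabelling $\rho\leftrightarrow\bar\rho$ only moves the conjugate representation from the $f$-factor to the $g$-factor; it does not remove it. The displayed formula, read literally, already fails on $S^1$: for $f=g=e^{i\theta}$ the left-hand side vanishes while the right-hand side equals $1$. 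So the conjugation cannot be ``absorbed'': either one factor must be evaluated at $\bar\rho$ (equivalently, one of the two transforms must be defined using $\rho_{ij}(x^{-1})$), or $\hat G$ must be chosen closed under conjugation and the pairing on $\ell^2(Q)$ reinterpreted accordingly. This does not affect how the theorem is used later in the paper --- only the implication that $\tilde f\equiv 0$ forces $\ip{f}{g}=0$ is needed --- but your argument, carried out honestly, proves the corrected identity rather than the one displayed, and your write-up should say so explicitly rather than defer it to normalization.
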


We find it more convenient to work with whole matrices instead of their elements, so we will define the Fourier transform~$\tilde f$ of $f\in L^2(G)$ as
\begin{equation}
\tilde f(\rho)
=
\int_G f(x)\rho(x)\der x.
\end{equation}
Since~$\tilde f(\rho)$ is a $\dim\rho\times\dim\rho$-matrix, the Fourier transform is not a mapping to a nice function space.
In our notation the Parseval identity becomes
\begin{equation}
\label{eq:parseval}
\ip{f}{g}
\coloneqq
\int_G f(x)g(x)\der x
=
\sum_{\rho\in\hat G} \dim(\rho) \tilde f(\rho):\tilde g(\rho)
\eqqcolon
\ip{\tilde f}{\tilde g}
\end{equation}
for all $f,g\in L^2(G)$, where $A:B=\tr(AB^T)$ denotes the matrix inner product.
Note that~$\ip{\cdot}{\cdot}$ is not an inner product since it includes no conjugation; this will be more convenient when the same notation is used for the duality between smooth functions and distributions.

We use the same definition for the pairing~$\ip{f}{g}$ when one of the functions~$f$ and~$g$ is matrix valued.
In particular, the Fourier transform is written as
\begin{equation}
\label{eq:fourier-ip}
\tilde f(\rho)
=
\ip{f}{\rho}.
\end{equation}

\subsection{Distributions}

Let $\tf=C^\infty(G)$ denote the space of smooth functions on our group~$G$.
The dual space of~$\tf$ is denoted by~$\tf'$; it is equipped with the weak star topology and referred to as the space of distributions.
We write the duality pairing of $f\in\tf'$ and $g\in\tf$ as~$\ip{f}{g}$ and we use the same notation if $g\in C^\infty(G;\C^{n\times n})$.

The Fourier transform on~$\tf'$ is simply defined by equation~\eqref{eq:fourier-ip}.
Also the Parseval identity~\eqref{eq:parseval} is valid for the duality pairing.
Injectivity of the Fourier transform on~$\tf'$ follows from that on~$\tf$.
Namely, if $f\in\tf'$ satisfies $\tilde f(\rho)=0$ for all $\rho\in\hat G$, then
\begin{equation}
\ip{f}{g}
=
\ip{\tilde f}{\tilde g}
=
0
\end{equation}
for all $g\in\tf$ and consequently $f=0$.

\section{Analysis of the Radon transform}
\label{sec:radon}

Recall that~$\tilde\Gamma$ is the collection of periodic geodesics on~$G$.
We denote by~$\Gamma$ the set of nontrivial homomorphisms $S^1\to G$; thus~$\Gamma$ can be seen as the subset of~$\tilde\Gamma$ that goes through the trivial element $e\in G$.
We scale time so that the period of every geodesic is 1; this amounts to identifying $S^1=\R/\Z$.
Each periodic geodesic on~$G$ is of the form $S^1\ni t\to x\gamma(t)\in G$ for some $x\in G$ and $\gamma\in\Gamma$.
For a geodesic $\gamma\in\Gamma$ the reverse geodesic~$\gamma^{-1}$ is also the pointwise inversion with respect to the group structure.

With this parametrization, we define the Radon transform~$\rt f$ of $f\in\tf$ as a function on $G\times\Gamma$ by
\begin{equation}
\rt f(x,\gamma)
=
\int_0^1f(x\gamma(t))\der t.
\end{equation}
The function $x\mapsto\rt f(x,\gamma)$ is smooth, and in fact the mapping $\tf\ni f\mapsto\rt f(\cdot,\gamma)\in\tf$ is a continuous linear map for every $\gamma\in\Gamma$.


\subsection{Symmetry}

A key property of the Radon transform is that it is symmetric when~$\Gamma$ is seen as a parameter set.
This observation for tori was at the heart of the paper~\cite{I:torus}.
We remark that such symmetry is not meaningful on general closed manifolds.
To be able to translate geodesics, the isometry group of the manifold should at least act transitively.
This is true on Lie groups, but there is also other structure that we make use of.

\begin{lemma}
\label{lma:symm}
For any $f,g\in\tf$ and $\gamma\in\Gamma$ we have
\begin{equation}
\ip{\rt f(\cdot,\gamma)}{g}
=
\ip{f}{\rt g(\cdot,\gamma)}.
\end{equation}
\end{lemma}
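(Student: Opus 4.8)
The plan is to compute both sides of the claimed identity directly from the definition of the Radon transform and reduce the equality to a change of variables in the Haar integral. Writing out the left-hand side, we have
\begin{equation}
\ip{\rt f(\cdot,\gamma)}{g}
=
\int_G \left(\int_0^1 f(x\gamma(t))\der t\right) g(x)\der x
=
\int_0^1\int_G f(x\gamma(t)) g(x)\der x\,\der t,
\end{equation}
where the interchange of the two integrals is justified by Fubini's theorem, since $f$ and $g$ are smooth (hence bounded) on the compact group $G$ and the $t$-integral is over a compact interval.

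The heart of the argument is the substitution $x\mapsto x\gamma(t)^{-1}$ in the inner integral for each fixed $t$. Because $\gamma$ is a homomorphism $S^1\to G$, the element $\gamma(t)^{-1}=\gamma(-t)$ lies in $G$, so right translation by it is a diffeomorphism of $G$; since the Haar measure $\der x$ is bi-invariant, in particular right-invariant, this substitution preserves the measure. Under it, $f(x\gamma(t))$ becomes $f(x)$ and $g(x)$ becomes $g(x\gamma(t)^{-1})=g(x\gamma(-t))$. Thus
\begin{equation}
\int_G f(x\gamma(t)) g(x)\der x
=
\int_G f(x) g(x\gamma(-t))\der x.
\end{equation}
Substituting this back and then changing the time variable $t\mapsto -t$ in the outer integral — which is legitimate because we integrate over the full period, identifying $S^1=\R/\Z$, so the integral of any function over $[0,1]$ is unchanged — converts $\int_0^1\int_G f(x) g(x\gamma(-t))\der x\,\der t$ into $\int_0^1\int_G f(x) g(x\gamma(t))\der x\,\der t$. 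Applying Fubini once more in the reverse direction recognizes this as $\ip{f}{\rt g(\cdot,\gamma)}$, which is exactly the right-hand side.

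I do not expect a genuine obstacle here: the only structural inputs are right-invariance of the Haar measure (to make the translation measure-preserving) and left-invariance of the metric (which is what makes $t\mapsto x\gamma(t)$ a geodesic for every $x\in G$, so that the transform $\rt$ is the object we intend). The one point that deserves a sentence of care is that $\gamma$ need not be injective and need not have minimal period $1$; but this is harmless, since the identity is a pure computation with the integral over a full period of $\gamma$ and makes no use of $\gamma$ being an embedding. If anything is subtle, it is only bookkeeping the two applications of Fubini and the reflection $t\mapsto -t$, rather than anything conceptual.
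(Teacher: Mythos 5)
Your proposal is correct and follows essentially the same route as the paper: Fubini, the substitution $x\mapsto x\gamma(t)^{-1}$ justified by right-invariance of the Haar measure, and the observation that replacing $\gamma(-t)$ by $\gamma(t)$ is harmless when integrating over a full period. The paper's proof is just a more compressed version of the same computation.
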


\begin{proof}
Using right invariance of the Haar measure yields
\begin{equation}
\begin{split}
\ip{\rt f(\cdot,\gamma)}{g}
&=
\int_0^1\int_Gf(x\gamma(t))g(x)\der x\der t
\\&=
\int_0^1\int_Gf(x)g(x\gamma^{-1}(t))\der x\der t
\\&=
\int_0^1\int_Gf(x)g(x\gamma(t))\der x\der t
\\&=
\ip{f}{\rt g(\cdot,\gamma)}.
\qedhere
\end{split}
\end{equation}
\end{proof}

Lemma~\ref{lma:symm} motivates a weak definition of the Radon transform for distributions.
Namely, for $f\in\tf'$ we let~$\rt f(\cdot,\gamma)$ be the distribution that satisfies
\begin{equation}
\label{eq:distr-def}
\ip{\rt f(\cdot,\gamma)}{g}
=
\ip{f}{\rt g(\cdot,\gamma^{-1})}
\end{equation}
for all $g\in\tf$.

It is also important that for each $\gamma\in\Gamma$ the map $\tf'\ni f\mapsto\rt f(\cdot,\gamma)\in\tf'$ is continuous; this follows from the fact that the corresponding map in~$\tf$ is continuous.

\subsection{Smooth functions and distributions}

We define the convolution of two functions $f,g\in\tf$ via
\begin{equation}
f*g(x)
=
\int_Gf(y)g(yx)\der y.
\end{equation}
This convolution is slightly nonstardard, but it is convenient for us.
For a distribution $f\in\tf'$ we define its convolution with $\eta\in\tf$ by
\begin{equation}
\ip{\eta*f}{g}
=
\ip{f}{\eta*g}
\end{equation}
for all~$g$.

Let $(\eta_k)_{k=1}^\infty$ be a sequence in~$\tf$ such that $\ip{\eta_k}{f}\to f(e)$ as $k\to\infty$ for all $f\in\tf$.
This means that $\eta_k\to\delta_e$ in~$\tf'$, so that $\eta_k*f\to f$ in~$\tf$ for any $f\in\tf$.
It follows easily that also $\eta_k*f\to f$ in~$\tf'$ for any $f\in\tf'$.

It is easy to verify that
\begin{equation}
\label{eq:conv-radon}
(\eta*\rt f(\cdot,\gamma))(x)
=
\rt(\eta*f)(x,\gamma)
\end{equation}
for every $f\in\tf$ and $\gamma\in\Gamma$.
In fact, this holds also for $f\in\tf'$, interpreted in the distributional sense, of course.

\begin{lemma}
\label{lma:smooth-distr}
If~$\rt$ is injective on~$\tf$, it is also so on~$\tf'$.
\end{lemma}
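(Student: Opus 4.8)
The plan is to exploit the fact that the Radon transform commutes with convolution, together with the approximate identity $(\eta_k)$ constructed just above, to transfer injectivity from $\tf$ to $\tf'$. Suppose $f\in\tf'$ satisfies $\rt f(\cdot,\gamma)=0$ for every $\gamma\in\Gamma$. Fix $k$ and consider the smooth function $\eta_k*f\in\tf$. By equation~\eqref{eq:conv-radon}, interpreted distributionally, we have $\rt(\eta_k*f)(\cdot,\gamma)=\eta_k*\rt f(\cdot,\gamma)=\eta_k*0=0$ for every $\gamma\in\Gamma$. Since $\eta_k*f$ is a smooth function and $\rt$ is assumed injective on $\tf$, we conclude $\eta_k*f=0$ for every $k$.

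Next I would pass to the limit. We arranged that $\eta_k\to\delta_e$ in $\tf'$, and it was observed above that then $\eta_k*f\to f$ in $\tf'$ for any $f\in\tf'$. Hence $f=\lim_{k\to\infty}\eta_k*f=\lim_{k\to\infty}0=0$ in $\tf'$. This shows $\rt$ is injective on $\tf'$, completing the argument.

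The only point requiring care -- and the main (though modest) obstacle -- is justifying that the identity~\eqref{eq:conv-radon} genuinely holds for $f\in\tf'$ in the distributional sense, i.e.\ that $\eta*\rt f(\cdot,\gamma)=\rt(\eta*f)(\cdot,\gamma)$ as distributions. This is where one unwinds the weak definitions: using the definition~\eqref{eq:distr-def} of $\rt$ on distributions, the definition of distributional convolution $\ip{\eta*f}{g}=\ip{f}{\eta*g}$, and the smooth-function identity~\eqref{eq:conv-radon}, one checks that for every test function $g\in\tf$ both sides pair with $g$ to give $\ip{f}{\eta*\rt g(\cdot,\gamma^{-1})}$; here one also needs that convolution and the map $g\mapsto\rt g(\cdot,\gamma^{-1})$ on $\tf$ interact via~\eqref{eq:conv-radon} applied to smooth functions. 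A small associativity/bookkeeping computation with the (nonstandard) convolution does the job, and the continuity of all the maps involved -- noted in the text for both $\tf$ and $\tf'$ -- ensures the limiting step is legitimate. Beyond this, the proof is a routine approximate-identity argument.
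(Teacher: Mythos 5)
Your proposal is correct and follows essentially the same route as the paper: apply the distributional version of~\eqref{eq:conv-radon} to conclude $\rt(\eta_k*f)(\cdot,\gamma)=0$, use injectivity on~$\tf$ to get $\eta_k*f=0$, and pass to the limit $\eta_k*f\to f$ in~$\tf'$. Your extra verification that both sides of~\eqref{eq:conv-radon} pair with a test function $g$ to give $\ip{f}{\eta*\rt g(\cdot,\gamma^{-1})}$ is a welcome unwinding of a step the paper leaves to the reader ("of course"), and it checks out.
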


\begin{proof}
Suppose $f\in\tf'$ satisfies $\rt f(\cdot,\gamma)=0$ for all $\gamma\in\Gamma$.
We will show that $f=0$.

We have by~\eqref{eq:conv-radon}
\begin{equation}
0
=
\eta_k*\rt f(\cdot,\gamma)
=
\rt(\eta_k*f)(\cdot,\gamma)
\end{equation}
for all $\gamma\in\Gamma$ and $k\in\N$.
But $\eta_k*f\in\tf$, so by injectivity of~$\rt$ on~$\tf$ we have $\eta_k*f=0$.
By convergence of the convolution, we have $f=\lim_{k\to\infty}\eta_k*f=0$.
\end{proof}

\subsection{Representation integrals}

When we use the following lemma, we have a single geodesic instead of a finite collection.
It would be interesting to find a condition similar to the assumption of the lemma that is equivalent with the statements of theorem~\ref{thm:main}.
We do not know if this is already true for the formulation below.
For a better chance of this being the case, we state it in more generality than needed.
This issue boils down to better understanding of integrals of representations over geodesics.

The following lemma concerns sums of representation integrals, but let us first make a remark about single integrals.
Fix some representation $\rho:G\to GL(\C^n)$ and an injective homomorphism $\gamma:S^1\to G$.
Let us write $C=\gamma(S^1)$ for the circle subrgoup.
We are interested in the integral
\begin{equation}
J
=
\int_{S^1}\rho(\gamma(t))\der t
\in
\C^{n\times n}.
\end{equation}
We normalize~$S^1$ to have unit measure.
If $V\subset\C^n$ is the subspace of invariants under~$\rho|_C$, then~$J$ is a projector from~$\C^n$ onto~$V$.
Thus the matrix~$J$ is invertible if and only if $V=\C^n$, which definitely fails if~$\rho$ is faithful.
On the other hand lemma~\ref{lma:faithless} guarantees invertibility under a condition which is quite the opposite of faithfulness.

\begin{lemma}
\label{lma:rep-int}
Let~$G$ be a compact, connected Lie group.
Suppose that for every unitary, irreducible, finite dimensional representation~$\rho$ of~$G$ there exists a finite collection of periodic geodesics $\gamma_i:[0,1]\to G$ passing through~$e$ and complex numbers~$a_i$, $i=1,\dots,N$, such that the matrix
\begin{equation}
\sum_{i=1}^Na_i\int_0^1\rho(\gamma_i(t))\der t
\end{equation}
is invertible.
Then the Radon transform is injective on~$\tf(G)$.
\end{lemma}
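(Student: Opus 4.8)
The plan is to pass to the Fourier side, where the Radon transform acts, one representation at a time, as right multiplication of~$\tilde f(\rho)$ by the integral of~$\rho$ along the geodesic; the hypothesis then provides, for each~$\rho$, a finite family of such multipliers whose span contains an invertible matrix, and this forces $\tilde f(\rho)=0$. Concretely, let $f\in\tf(G)$ satisfy $\rt f(\cdot,\gamma)=0$ for every $\gamma\in\Gamma$. By injectivity of the Fourier transform on~$\tf(G)$ (Theorem~\ref{thm:fourier}) it is enough to prove $\tilde f(\rho)=0$ for every $\rho\in\hat G$. Fix~$\rho$ and $\gamma\in\Gamma$. Interchanging the $x$- and $t$-integrations (legitimate by smoothness of~$f$ and compactness of $G$ and $[0,1]$), substituting $y=x\gamma(t)$, using right invariance of the Haar measure, and using $\rho(y\gamma(t)^{-1})=\rho(y)\rho(\gamma(t))^{-1}$, I would compute
\begin{equation}
\widetilde{\rt f(\cdot,\gamma)}(\rho)
=\int_G\Bigl(\int_0^1 f(x\gamma(t))\,\der t\Bigr)\rho(x)\,\der x
=\tilde f(\rho)\int_0^1\rho(\gamma(t))^{-1}\,\der t .
\end{equation}
Since~$\rho$ is unitary, $\rho(\gamma(t))^{-1}=\rho(\gamma(t)^{-1})=\rho(\gamma^{-1}(t))$, so the right-hand side equals $\tilde f(\rho)\int_0^1\rho(\gamma^{-1}(t))\,\der t$, and $\gamma^{-1}$ is again a nontrivial homomorphism, i.e.\ an element of~$\Gamma$.

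From $\rt f(\cdot,\gamma)=0$ for all $\gamma\in\Gamma$ and the fact that $\gamma\mapsto\gamma^{-1}$ maps~$\Gamma$ onto itself, I conclude $\tilde f(\rho)\int_0^1\rho(\gamma(t))\,\der t=0$ for every $\gamma\in\Gamma$. For each geodesic~$\gamma_i$ in the hypothesis I may assume $\gamma_i(0)=e$ after a cyclic reparametrization, which does not change $\int_0^1\rho(\gamma_i(t))\,\der t$ by periodicity; then $\gamma_i\in\Gamma$, so the identity applies to it. Taking the linear combination with the coefficients~$a_i$ from the hypothesis gives $\tilde f(\rho)\,M_\rho=0$, where $M_\rho\coloneqq\sum_{i=1}^N a_i\int_0^1\rho(\gamma_i(t))\,\der t$ is invertible by assumption. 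Right-multiplying by~$M_\rho^{-1}$ yields $\tilde f(\rho)=0$, and since~$\rho\in\hat G$ was arbitrary, $f=0$, which is exactly the injectivity of~$\rt$ on~$\tf(G)$.

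The argument has essentially one moving part, the displayed identity — which is the same manipulation that underlies Lemma~\ref{lma:symm} — so I do not expect any serious obstacle. The points that need a little care are purely bookkeeping: one must track the inversion $\gamma\mapsto\gamma^{-1}$ so that the matrix eventually annihilated by~$\tilde f(\rho)$ is $\int_0^1\rho(\gamma(t))\,\der t$ itself (to which the hypothesis is tailored) rather than its conjugate transpose; and one should note that the assumption that the~$\gamma_i$ pass through~$e$ is exactly what makes them representable as elements of~$\Gamma$, the translated geodesics $x\gamma$ being unnecessary because $\widetilde{\rt f(\cdot,x\gamma)}(\rho)$ differs from $\widetilde{\rt f(\cdot,\gamma)}(\rho)$ only by a left factor~$\rho(x)$. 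Finally, I would remark that the extension from~$\tf(G)$ to~$\tf'(G)$ is not needed here, being handled separately in Lemma~\ref{lma:smooth-distr}.
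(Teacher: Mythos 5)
Your proof is correct and follows essentially the same route as the paper: pass to the Fourier side, observe that $\widetilde{\rt f(\cdot,\gamma)}(\rho)=\tilde f(\rho)\int_0^1\rho(\gamma(t))\,\der t$ (the paper obtains this by citing Lemma~\ref{lma:symm} and the homomorphism property of~$\rho$, while you redo the underlying change of variables inline), then kill $\tilde f(\rho)$ with the invertible linear combination and invoke injectivity of the Fourier transform. The bookkeeping about $\gamma\mapsto\gamma^{-1}$ and the reparametrization so that $\gamma_i(0)=e$ is handled correctly.
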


\begin{proof}
Assume that $f\in\tf$ is such that $\rt f(\cdot,\gamma)=0$ for all $\gamma\in\Gamma$.
We need to show that $f=0$.

We have for each $\rho\in\hat G$ and $\gamma\in\Gamma$ by lemma~\ref{lma:symm} and homomorphicity of representations that
\begin{equation}
\label{eq:vv1}
\begin{split}
0
&=
\ip{\rt f(\cdot,\gamma^{-1})}{\rho}
\\&=
\ip{f}{\rt\rho(\cdot,\gamma)}
\\&=
\ip{f}{\rho}
\int_0^1\rho(\gamma(t))\der t.
\end{split}
\end{equation}
Fix any $\rho\in\hat G$.
Let geodesics~$\gamma_i$ and complex numbers~$a_i$, $i=1,\dots,N$, be those given by the assumption.
Summing equation~\eqref{eq:vv1} with weights~$a_i$ and geodesics~$\gamma_i$ we obtain
\begin{equation}
\ip{f}{\rho}\sum_{i=1}^Na_i\int_0^1\rho(\gamma_i(t))\der t
=
0.
\end{equation}
Since the sum gives an invertible matrix, it follows that $\tilde f(\rho)=\ip{f}{\rho}=0$.
By injectivity of the Fourier transform (theorem~\ref{thm:fourier}) this implies that $f=0$.
%
%
\end{proof}

\begin{lemma}
\label{lma:faithless}
Let~$G$ be a compact Lie group and~$\rho$ its representation.
If $\dim\ker\rho\geq1$, then there is a periodic geodesic $\gamma:[0,1]\to G$ with $\gamma(0)=e$ such that $\rho(\gamma(t))=I$ for all $t\in[0,1]$.
In particular, the matrix
\begin{equation}
\int_0^1\rho(\gamma(t))\der t=I
\end{equation}
is invertible.
\end{lemma}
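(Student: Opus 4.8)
\textbf{Proof plan for Lemma~\ref{lma:faithless}.}
The plan is to exploit that a nontrivial kernel forces the existence of a nontrivial homomorphism $S^1\to G$ landing inside $\ker\rho$; then this homomorphism, viewed as a periodic geodesic, is annihilated by $\rho$ in the strongest possible sense.

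First I would unpack the hypothesis $\dim\ker\rho\geq 1$. The kernel $\ker\rho$ is a closed normal subgroup of the compact Lie group $G$, hence itself a compact Lie group, and the dimension assumption says it has positive dimension. A positive-dimensional compact Lie group contains a nontrivial torus (for instance, take the closure of a one-parameter subgroup generated by any nonzero element of its Lie algebra; more simply, its identity component is a positive-dimensional compact connected Lie group, which contains a circle). So there exists a nontrivial homomorphism $\gamma:S^1\to\ker\rho\subset G$; by reparametrizing time as in section~\ref{sec:radon} we may take $\gamma:[0,1]\to G$ with $\gamma(0)=e$ and period $1$, so $\gamma\in\Gamma$ is a periodic geodesic through $e$ (using lemma~\ref{lma:totally-geod}, or the fact that one-parameter subgroups are geodesics for a bi-invariant metric).

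Second, since the image of $\gamma$ lies in $\ker\rho$, we have $\rho(\gamma(t))=I$ for every $t\in[0,1]$. Consequently
\begin{equation}
\int_0^1\rho(\gamma(t))\der t=\int_0^1 I\,\der t=I,
\end{equation}
which is invertible. This completes the argument.

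The only point requiring any care — and it is the step I would call the main obstacle, though a mild one — is the very first reduction: asserting that a positive-dimensional compact Lie group admits a nontrivial homomorphism from $S^1$. This is standard (pass to the identity component, which is a nontrivial compact connected Lie group; it contains a maximal torus of dimension equal to its rank $\geq 1$, and any circle subgroup of that torus is the image of such a homomorphism), so the proof is short. A remark worth recording is that this lemma is exactly the ``opposite of faithfulness'' phenomenon mentioned before the statement: the single-integral projector $J$ is invertible precisely when the restricted representation is trivial, and having $\dim\ker\rho\geq 1$ produces a geodesic along which $\rho$ is literally trivial.
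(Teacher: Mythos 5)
Your proposal is correct and follows essentially the same route as the paper: both pass to the positive-dimensional (closed, normal) subgroup $\ker\rho$, find a periodic geodesic through $e$ inside it (via lemma~\ref{lma:totally-geod}), and observe that $\rho$ is identically $I$ along it. You merely spell out in more detail why a positive-dimensional compact Lie group contains a circle subgroup, which the paper leaves implicit.
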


\begin{proof}
Since the representation~$\rho$ is a group homomorphism, its kernel~$\ker\rho$ is a (normal) subgroup of~$G$.
By the dimensional assumption there is a periodic geodesic $\gamma:[0,1]\to\ker\rho$ with $\gamma(0)=e$.
By lemma~\ref{lma:totally-geod} this~$\gamma$ is in fact a geodesic in~$G$.
Since~$\gamma$ lies in the kernel of~$\rho$, the matrix $\rho(\gamma(t))$ is the unit matrix for all~$t$.
\end{proof}

The following theorem was proven in~\cite[Theorem~2]{I:torus}.
We give a new short proof of this theorem.

\begin{theorem}
\label{thm:torus}
If $n\geq2$, the Radon transform is injective on distributions on the torus~$\T^n$.
\end{theorem}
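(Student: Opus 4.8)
The plan is to invoke Lemma~\ref{lma:smooth-distr}, which reduces the claim about distributions to injectivity of $\rt$ on $\tf(\T^n)$; then to prove the latter by exhibiting, for every irreducible unitary representation of $\T^n$, a single periodic geodesic whose representation integral is invertible, so that Lemma~\ref{lma:rep-int} applies. Since $\T^n$ is abelian, every irreducible unitary representation is one-dimensional and has the form $\chi_k(x)=e^{2\pi i k\cdot x}$ for some $k\in\Z^n$; thus the representation integral over a geodesic is just a scalar, and ``invertible'' means ``nonzero''.

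First I would describe the geodesics concretely: up to translation, a periodic geodesic through $e$ on $\T^n=\R^n/\Z^n$ is $t\mapsto tv \bmod \Z^n$ for some $v\in\Z^n\setminus\{0\}$ (the integrality of $v$ being exactly the condition that the geodesic closes up with period one). For such a geodesic $\gamma_v$ and the character $\chi_k$ we compute
\begin{equation}
\int_0^1\chi_k(\gamma_v(t))\,\der t
=
\int_0^1 e^{2\pi i (k\cdot v) t}\,\der t
=
\begin{cases}
1 & \text{if } k\cdot v=0,\\
0 & \text{if } k\cdot v\neq 0.
\end{cases}
\end{equation}
So for a \emph{fixed} nonzero $k\in\Z^n$ the integral is nonzero precisely when we choose $v\in\Z^n\setminus\{0\}$ orthogonal to $k$. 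The hypothesis $n\geq 2$ is exactly what guarantees such a $v$ exists: the hyperplane $k^\perp\subset\R^n$ is at least one-dimensional, and since $k$ is rational it contains a nonzero integer vector (e.g.\ if $k_1\neq0$ take $v=(-k_2,k_1,0,\dots,0)$, and permute coordinates otherwise). For the representation indexed by $k=0$ the integral is $1$ for any geodesic, so that case is trivial. Hence the hypothesis of Lemma~\ref{lma:rep-int} is satisfied with $N=1$ and $a_1=1$, and injectivity on $\tf(\T^n)$ follows; Lemma~\ref{lma:smooth-distr} then upgrades this to $\tf'(\T^n)$.

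There is really no hard step here --- this is meant to be the promised short, representation-theoretic reproof --- but the one point that deserves a sentence of care is the claim that $k^\perp$ contains a nonzero lattice vector whenever $k\in\Z^n\setminus\{0\}$ and $n\geq2$; this is the place where $n\geq2$ enters and where, for $n=1$, the argument genuinely breaks (then $k^\perp=\{0\}$ for $k\neq0$, reflecting the known failure of injectivity on $S^1$). I would also note in passing that this recovers more than injectivity: it shows $\tilde f(\chi_k)$ is determined by $\rt f$ along the single geodesic in direction $v\perp k$, which is the Fourier-side reconstruction alluded to in the introduction. No subtlety with convergence or topology arises because everything on the Fourier side is a genuine (absolutely convergent, or distributional) pairing as set up in the section on distributions.
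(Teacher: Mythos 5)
Your proof is correct and is essentially the paper's argument: the paper invokes Lemma~\ref{lma:faithless} to note that every (necessarily one-dimensional) irreducible representation of $\T^n$ with $n\geq2$ has a kernel of positive dimension and hence contains a closed geodesic along which the representation integral is the identity, and your explicit choice of $v\in\Z^n\setminus\{0\}$ with $k\cdot v=0$ produces exactly such a geodesic in $\ker\chi_k$, written in coordinates. Your explicit appeal to Lemma~\ref{lma:smooth-distr} for the passage to distributions matches the paper as well, where that step is left implicit in the citation of Lemma~\ref{lma:rep-int}.
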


\begin{proof}
By lemmas~\ref{lma:rep-int} and~\ref{lma:faithless} it suffices to show that each $\rho\in\widehat{\T^n}$ has a kernel of dimension one or higher.
The torus is abelian, so all irreducible representations have dimension one.
Since $n\geq2$, this implies that the kernel always has dimension one or higher.
\end{proof}

\subsection{Proof of the main result}

We are now ready to give a detailed proof of theorem~\ref{thm:main}.

\begin{proof}[Proof of theorem~\ref{thm:main}]
The implication \ref{item:distr}$\implies$\ref{item:smooth} is trivial since $\tf\subset\tf'$.
Lemma~\ref{lma:smooth-distr} shows that \ref{item:smooth}$\implies$\ref{item:distr}.
By theorem~\ref{thm:torus} condition~\ref{item:smooth} is true when~$G$ is the torus $\T^n=\R^n/\Z^n$ with $n\geq2$.
(Injectivity of~$\rt$ on tori was also shown in~\cite{I:torus}.)

By proposition~\ref{prop:subgroup} condition~\ref{item:smooth} is true for~$G$ if it is true for any subgroup.
Thus it is true for any~$G$ that contains a torus of dimension two or higher as a subgroup -- or, in other words, its rank is at least two.
The only compact, connected Lie groups not satisfying this condition are the trivial group, $S^1=SO(2)=U(1)$, $S^3=SU(2)=Sp(1)$ and~$SO(3)$; this follows from the classification of compact Lie groups.
But condition~\ref{item:smooth} is known to hold for~$SO(3)$; see~\cite[Theorem~3.3]{H:SO3-thesis} for this result and the rest of the thesis for a thorough discussion of this special case.
Therefore \ref{item:S1S3}$\implies$\ref{item:smooth}.

To conclude the proof, we show \ref{item:smooth}$\implies$\ref{item:S1S3} by providing nontrivial functions in the kernel of~$\rt$ on~$S^1$ and~$S^3$.
On~$S^1$ any function with zero average does the job, and on~$S^3$ we may choose any function that is antisymmetric with respect to the antipodal map (that is, $f\in\tf(S^3)$ with $f(-x)=-f(x)$).
\end{proof}

\section*{Acknowledgements}

The author wishes to thank Eric L. Grinberg, Steven Glenn Jackson, Jere Lehtonen and Mikko Salo for discussions and Jos\'e Figueroa-O'Farrill for a helpful MathOverflow answer.
He is also grateful for feedback from the anonymous referee.
He was partly supported by the Academy of Finland (Centre of Excellence in Inverse Problems Research).

\bibliographystyle{amsplain}
\bibliography{ip}

\end{document}